\documentclass[a4paper,7pt,oneside,onecolumn,number,preprint,centertitle]{elsarticle}

\usepackage{amsmath,amssymb,bm}
\usepackage{graphicx}
\usepackage{textcomp}
\usepackage{xcolor}
\usepackage{amsthm}
\usepackage{tikz}

\usepackage{enumerate}
\usepackage{bbold}
\usepackage{epsfig}
\usepackage{multicol}
\usepackage{float}
\usepackage{hyperref}
\usepackage{multirow}
\usepackage{mathrsfs} %
\usepackage{dsfont}
\usepackage{algorithm}
\usepackage{algorithmicx}
\usepackage{algpseudocode}
\usepackage{flushend}

\usepackage{ifthen}
\DeclareMathAlphabet{\mbb}{U}{bbold}{m}{n} %
\newcommand \mb[1] {\ifthenelse{\equal{#1}{0}}{\mbb{0}}{\ifthenelse{\equal{#1}{1}}{\mbb{1}}{\mathbb{#1}}}} %

\renewcommand \subset {\subseteq}

\renewcommand{\l}{\left}

\newtheorem{thm}{Theorem}

\newtheorem{rem}{Remark}
\newtheorem{lem}[thm]{Lemma}
\newtheorem{ass}{Assumption}
\newtheorem{defn}{Definition}

\newtheorem{exmp}{Example}%

\renewcommand{\textcolor}[2]{#2}
\begin{document}
\begin{frontmatter}

\title{$L_1$ Optimal Control of Continuous-Time Stochastic Positive Systems}

\author[inst1]{Alba Gurpegui}
    \ead{alba.gurpegui_ramon@control.lth.se}

\affiliation[inst1]{organization={Department of Automatic Control, Lund University},%
            addressline={Ole Römers väg 1}, 
            city={Lund},
            postcode={223 63}, 
            country={Sweden}}

\author[inst2]{Takashi Tanaka}
    \ead{tanaka16@purdue.edu}

    \affiliation[inst2]{organization={School of Aeronautics and Astronautics, Elmore Family School of Electrical and Computer Engineering, Purdue University},%
            addressline={701 W. Stadium Ave.}, 
            city={West Lafayette},
            postcode={IN 47907-2045}, 
            country={United States}}

\author[inst1]{Anders Rantzer}
    \ead{anders.rantzer@control.lth.se}

\begin{abstract}
We present an $L_1$-optimal control problem class with linear nonnegative costs subject to multiplicative Itô diffusion processes with elementwise linear input constraints. Forward invariance of the positive orthant is established for the considered stochastic dynamics, and a simulation method consistent with this invariance property is proposed. Both finite-horizon and discounted infinite-horizon stochastic $L_1$-optimal control problems are considered. These problems admit explicit solutions characterized by a vector-valued ordinary differential equation in the finite-horizon case and by an algebraic equation in the infinite-horizon case. Notably, the optimal value function and feedback policy coincide with those of the corresponding deterministic problem, demonstrating robustness to multiplicative stochastic uncertainty. A portfolio example illustrates our results.
\footnotetext{
This work is partially funded by the Wallenberg AI, Autonomous Systems and Software Program (WASP), the European Research Council under the  grant agreement No 101199738, DARPA grant HR0011-25-3-0210 and AFOSR grant FA9550-25-1-0347.}
\end{abstract}

\begin{keyword}
Stochastic Control \sep Positive Systems \sep Large-scale systems \sep Optimal Control \sep Robust Control.
\end{keyword}

\end{frontmatter}

Stochastic optimal control is a fundamental area of control theory with several engineering applications~\cite{Astrom_stoch_control}. A classical example is the Linear Quadratic Gaussian problem (LQG)~\cite{kalman_lqg}, which minimizes a quadratic cost for linear systems with Gaussian noise and admits closed-form solutions. While $L_2$-optimal control of stochastic systems has proven to be effective in a wide range of applications, including robotics, aerospace, and process control~\cite{Robotics_L2, aerospace_LQG, LQG_process_cont}, the $L_1$-norm is often a more appropriate choice in the presence of actuation constraints and sparse resource allocation~\cite{sparse_L1}, for instance in minimum-fuel control problems~\cite{min_fuel} or maximum hands-off control~\cite{max_hands_off}.  
Recent work on deterministic $L_1$-optimal control has introduced a novel optimal control problem class that admits explicit solutions in discrete~\cite{yuchao} and continuous time~\cite{LR_paperII}. We refer to this framework as the Linear Regulator problem (LR), which is characterized by linear costs, positive linear dynamics and elementwise linear constraints on inputs.

A relevant feature of the LR framework is the positivity of the dynamics. In deterministic continuous-time positive systems, forward invariance of the positive orthant follows directly from the Metzler structure of the state matrix~\cite{ Luenberger, FarinaRinaldi2000}. {Beyond this classical setting, the analysis and control of positive systems have been studied for singular and impulsive dynamics with time delays~\cite{Z1,Z2,Z3}}. In contrast, for Itô diffusion processes the diffusion term may drive trajectories outside the positive orthant even when the drift term in the equation satisfies the corresponding positivity conditions. Classical results on stochastic invariance and viability are given in~\cite{Da_prato_SDE_invariance, Milian_invariance_sde}, { including a stochastic version of the Nagumo viability theorem for stochastic differential equations with Lipschitz or monotone dynamics}~\cite{nagumo_stochastic}. Related questions regarding positivity, stability~\cite{stab_markov} and the $L_{\infty}$ and $L_1$ performance~\cite{L1_markov, infty_markov} have also been studied for stochastic systems with Markov and semi-Markov jump processes. In this paper we present a stochastic extension of the LR framework, subject to {a class of linear Itô diffusions with multiplicative
noise and bounded state-scaled control inputs.} The main contributions of this paper are summarized as follows.
\begin{enumerate}[C1]
    \item We prove forward invariance of the positive orthant for this class of Itô diffusions with bounded inputs {and propose a positivity-preserving time discretization algorithm.} 
    \item We propose a finite-horizon and a discounted infinite-horizon LR optimal control problem for positive systems with Gaussian multiplicative noise.
    \item We derive explicit solutions for both settings. The finite-horizon solution is given by a vector-valued ODE and a time-varying switching policy, and the discounted infinite-horizon solution is a vector-valued algebraic equation and an optimal static feedback policy.
    \item We show that the value function and optimal policy coincide with those of the deterministic LR problem, demonstrating robustness with respect to multiplicative stochastic uncertainty.
\end{enumerate}
\subsection{Notation}
{Let $\mathbb{R}_{+}$ denote the set of nonnegative real numbers. $X > Y$ $(X \geq Y)$ means all elements of the matrix $(X-Y)$ are positive (nonnegative).  A matrix $X$  is positive if its entries are nonnegative and at least one is nonzero. $\left | X \right |$ means elementwise absolute value and $\alpha(X)$ the spectral abscissa of $X$. $\mathbb E^{t,x}[\cdot]:=\mathbb E[\cdot | x(t)=x]$ is the conditional expectation. For a set $\mathcal Q \subset \mathcal A \times \mathcal B$, $C^{1,2}(\mathcal Q)$ is the class of functions that are continuously differentiable in $\mathcal A$ and twice continuously differentiable in $\mathcal B$. {The signum of a scalar, $\mathrm{sign}(x)$, equals $\big\{-1\big\}$ if $x<0$, $[-1, 1]$ if $x=0$ and $\big\{+1\big\}$ if $x>0$.}
\section{Stochastic Positive Systems}
A linear system is said to be positive if its state and output remain nonnegative for all nonnegative initial conditions and inputs. In particular, positivity of continuous-time linear systems is related to Metzler matrices.
\begin{defn}[Metzler]
    A square matrix $A$ is said to be Metzler if each off-diagonal element $a_{i,j}$, $i\neq j$ of $A$ is nonnegative.
\end{defn}
\begin{lem}\label{lem_pos}
    Consider the linear time-varying system $\dot x(t)=A(t)x(t), \hspace{1mm} t\in [0,T]$, 
    where $A(t)$ is a bounded, time-varying Metzler matrix for each $t\in [0, T]$. If $x(0)$ is entry-wise nonnegative, then $x(t)$ remains entry-wise nonnegative for all $t\in [0,T]$.
\end{lem}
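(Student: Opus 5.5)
The plan is to remove the diagonal part of $A(t)$ with an integrating factor, so that what remains is driven by a nonnegative matrix. Since $A(t)$ is bounded on $[0,T]$, there is a constant $c\ge 0$ with $|a_{ii}(t)|\le c$ for all $i$ and $t$. Set $z(t)=e^{ct}x(t)$. Then $z$ solves $\dot z(t)=(A(t)+cI)z(t)$ with $z(0)=x(0)\ge 0$. The matrix $M(t):=A(t)+cI$ is entrywise nonnegative: its off-diagonal entries are those of $A(t)$, which are nonnegative because $A(t)$ is Metzler, and its diagonal entries are $a_{ii}(t)+c\ge 0$. Because $e^{ct}>0$, $x(t)\ge 0$ is equivalent to $z(t)\ge 0$, so it suffices to prove nonnegativity of $z$.

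For $z$, I will write the solution as the limit of Picard iterates of the integral equation
\begin{equation*}
z(t)=z(0)+\int_0^t M(s)z(s)\,ds ,
\end{equation*}
starting from $z^{(0)}(t)=z(0)$ and setting $z^{(k+1)}(t)=z(0)+\int_0^t M(s)z^{(k)}(s)\,ds$. By induction every iterate is entrywise nonnegative: $z(0)\ge 0$, and the product of the nonnegative matrix $M(s)$ with a nonnegative vector is nonnegative, so the integral is nonnegative. Boundedness of $M$ (a bound $\|M(s)\|\le L$) gives the usual estimate $\|z^{(k+1)}-z^{(k)}\|\le \|z(0)\|(Lt)^{k+1}/(k+1)!$. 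Hence the iterates converge uniformly on $[0,T]$ to the unique solution, which is the Peano--Baker series. Since the nonnegative orthant is closed, the limit $z(t)$ is nonnegative, and therefore so is $x(t)=e^{-ct}z(t)$.

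The main obstacle is regularity. I will assume $A(\cdot)$ is measurable (for instance piecewise continuous), so the integrals exist and solutions are understood in the Carathéodory sense. Beyond that, boundedness of $A$ is the only property used: it supplies both the uniform shift constant $c$ and the Lipschitz bound $L$ needed for convergence and uniqueness. If one prefers to avoid the series, an alternative I would keep in reserve is a perturbation argument. Solve $\dot x_\epsilon=A(t)x_\epsilon+\epsilon\mathbf 1$ and consider a first time at which some component $x_{\epsilon,i}$ reaches zero. At that time $\dot x_{\epsilon,i}=\sum_{j\ne i}a_{ij}x_{\epsilon,j}+\epsilon>0$, so $x_{\epsilon,i}$ cannot cross into negative values. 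Letting $\epsilon\to 0$ and using continuous dependence on $\epsilon$ then gives the result. However, that argument needs continuity of $A$ at the touching time, so the integrating-factor and Picard route is the cleaner choice.
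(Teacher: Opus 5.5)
Your proof is correct, and it differs from the paper's treatment, because the paper gives no argument of its own. It only cites the monotone-systems literature (Angeli--Sontag, Sec.~VIII), treating the lemma as a known instance of order preservation for cooperative systems with respect to the nonnegative orthant.

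Your route is self-contained and elementary. The shift $M(t)=A(t)+cI$ turns the Metzler property into entrywise nonnegativity. The Picard (Peano--Baker) series then writes the transition matrix $\Phi(t,s)$ of $\dot x=A(t)x$ as $e^{-c(t-s)}$ times a convergent sum of iterated integrals of nonnegative matrices. So you actually get $\Phi(t,s)\geq 0$ for all $0\le s\le t\le T$, slightly more than the lemma states.

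The citation buys generality: nonlinear cooperative vector fields, general cones, and systems with inputs. Your proof instead makes explicit exactly which hypotheses are used: boundedness of $A(\cdot)$, the Metzler property for almost every $t$, and measurability, with solutions in the Carath\'eodory sense.

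That regularity point matters for how the lemma is applied in Theorem~\ref{Thm1}. There $\tilde A(t)$ contains the gains $K_m(t)$, which for the switching policies of Theorem~\ref{thm_finite} are discontinuous in $t$. The factors $e^{f_jv_j(t)-f_iv_i(t)}$, by contrast, are continuous and hence bounded on $[0,T]$ along each sample path. So preferring the integral-equation argument over the first-hitting-time perturbation argument, which needs $\dot x$ to exist at the touching time, is the right choice. The uniqueness you obtain from the bound on $\|M(s)\|$ is also what identifies the process $y(t)$ in Theorem~\ref{Thm1} with your nonnegative Picard limit.
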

\begin{proof}
    See~\cite[Sec. VIII]{Angeli}.
\end{proof}

In what follows, we assume that all the random processes
are defined on a probability space $(\Omega, \mathcal F, P)$ {~\cite{Oksendal}}. Let $\mathcal F_t \subset \mathcal F$ be a nondecreasing family of $\sigma$-algebras, and let $w(t)\in \mathbb R^N$, $t \geq 0$, $w(0)=0$, be an $\mathcal F_t$-adapted $N$-dimensional Brownian motion (BM) satisfying
\begin{align*}
    \mathbb E[(w(t)-w(s))(w(t)-w(s))^{\top}]=(t-s)I_N,
\end{align*}
for all $0\leq s \leq t$. In this paper, we are concerned with the dynamical system
defined by the following Itô stochastic differential equation
(SDE):
\begin{align}\label{SDE}
    dx(t)&=f(x(t), u(t))dt+\textstyle\sum\nolimits_{n=1}^N F_n x(t)d w_n(t) \notag\\
    x(0)&=x_0, \hspace{1mm} x_0\in \mathbb R^L_+ \hspace{2mm}0\leq t\leq T
\end{align}
where $f(x(t), u(t))=Ax(t)+\sum_{m=1}^M B_m u_m(t)$ {is the drift, $F_n \in \mathbb R^{L\times L}$ are the diffusion matrices $n=1,\dots, N$,}  $A, B_m \in \mathbb R^{L\times L}$ for each $m=1,\dots,M$, and $x(t)\in \mathbb R^L $, $u_m(t)\in \mathbb R^L$. The vector $x(t)$ denotes the time-varying state, {the diffusion term in~\eqref{SDE} is a multivariate geometric Brownian motion (GBM)~\cite{Hu_finance}} and $u_m(t)= {K_m}(t)x(t)$ are the state scaled, bounded control inputs, {$K_m(t)$ is a diagonal $L\times L$ matrix} {with $|{K_m(t)}|\leq I_L e_m $, where $I_L$ is the $L$-dimensional identity matrix} and $e_m\in \mathbb R_+$ for $m=1,\dots, M$. In the sequel, we interpret $x(t)$ as the strong solution to the SDE~\eqref{SDE} whose existence and uniqueness {are guaranteed by requiring that an admissible control input $u(\cdot)$ is such that the drift and diffusion terms of the SDE~\eqref{SDE} satisfy the conditions of~\cite[Def. 3.1.4]{Oksendal} and~\cite[Thm. 5.2.1]{Oksendal}.} 
\begin{ass}\label{ass_1}
Suppose that the dynamical system~\eqref{SDE} has the following additional properties:
    \begin{enumerate}[(a)]
    \item $F_n$ is a diagonal matrix for each $n$;
    \item $A-\textstyle\sum\nolimits_{m=1}^M |B_m|e_m $ Metzler.
\end{enumerate}
\end{ass}
\begin{rem}
    {Condition $(b)$ of} Assumption~\ref{ass_1} ensures the invariance of the positive orthant for  the drift term of~\eqref{SDE}.
\end{rem}
 {The positivity of the solution of multi-dimensional GBM with deterministic drift and diffusion coefficients was investigated in~\cite{Hu_finance}, where forward invariance of the positive orthant is established in the absence of control inputs. The following theorem extends this} to the controlled Itô diffusion considered here and shows that the positive orthant is forward invariant almost surely i.e. if $x(0)=x_0\in\mathbb R^L_+$ then $x(t)\in \mathbb R^L_+$ for all $t\geq 0$ and for all admissible $u(t)$. This property can alternatively be approached through the stochastic invariance theory for differential inclusions~\cite[Thm.~5.1]{Da_prato_SDE_invariance}; we give a direct constructive proof, which moreover underlies Algorithm~\ref{algorithm_GBM}. 
\begin{thm}\label{Thm1}
    Consider a dynamical system of the form~\eqref{SDE} that satisfies Assumption~\ref{ass_1}. Suppose that {$u(t)=K_m(t)x(t)$ is such that conditions of~\cite[Def. 3.1.4]{Oksendal} and~\cite[Thm. 5.2.1]{Oksendal} are satisfied} {and $|K_m(t)|\leq I_L e_m$ for $m=1,\dots, M$.} If the initial state {$x_0\in \mathbb R^L_+$,} then $x(t)$ remains component-wise nonnegative for all $t>0$.
\end{thm}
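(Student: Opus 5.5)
The plan is to remove the multiplicative noise with a stochastic integrating factor and then apply Lemma~\ref{lem_pos} pathwise. Under the feedback $u_m(t)=K_m(t)x(t)$, the drift becomes
\[
f(x,u)=\bigl(A+\textstyle\sum_m B_mK_m(t)\bigr)x =: \tilde A(t)x .
\]
Since each $F_n=\mathrm{diag}(f_{n,1},\dots,f_{n,L})$ is diagonal by Assumption~\ref{ass_1}(a), the matrices $F_n$ commute with each other and with any diagonal matrix. Define the diagonal process
\[
\Phi(t)=\exp\Bigl(\textstyle\sum_n F_n w_n(t)-\tfrac12\sum_n F_n^2 t\Bigr).
\]
This is the fundamental solution of $d\Phi=\sum_n F_n\Phi\,dw_n$. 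It is entrywise strictly positive and almost surely continuous, hence pathwise bounded on $[0,T]$, together with its inverse
\[
\Phi(t)^{-1}=\exp\Bigl(-\textstyle\sum_n F_n w_n+\tfrac12\sum_n F_n^2 t\Bigr).
\]

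Next set $y(t)=\Phi(t)^{-1}x(t)$ and compute $dy$ with the multidimensional Itô product rule. Because $\Phi^{-1}$ is diagonal, everything reduces to componentwise scalar computations. The quadratic covariation term $d\Phi^{-1}\,dx$ contributes $-\sum_n F_n^2 x\,dt$, and $d\Phi^{-1}$ contributes $-\sum_n F_n\Phi^{-1}x\,dw_n+\sum_n F_n^2\Phi^{-1}x\,dt$ acting on $x$. The $dw_n$ terms and the $F_n^2$ terms should then cancel, leaving the random ODE
\[
\dot y(t)=\Phi(t)^{-1}\tilde A(t)\Phi(t)\,y(t),\qquad y(0)=x_0 .
\]
This cancellation is the step I would check most carefully: keeping the signs and the $\tfrac12$ factors consistent is exactly where the proof can go wrong.

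For the Metzler property, conjugating by a positive diagonal matrix $D=\Phi(t)$ keeps every diagonal entry fixed and multiplies off-diagonal entry $(i,j)$ by $D_{jj}/D_{ii}>0$, so off-diagonal signs are preserved. It therefore suffices to show that $\tilde A(t)$ is Metzler. Since $K_m(t)$ is diagonal, $(B_mK_m)_{ij}=(B_m)_{ij}(K_m)_{jj}\ge -|B_m|_{ij}e_m$, using $|K_m(t)|\le I_Le_m$. Hence the off-diagonal entries of $\tilde A(t)$ dominate those of $A-\sum_m|B_m|e_m$, which are nonnegative by Assumption~\ref{ass_1}(b).

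For boundedness, on each sample path (outside a null set) $\Phi$ and $\Phi^{-1}$ are continuous on $[0,T]$ and $K_m$ is bounded by $e_m$, so $\Phi^{-1}\tilde A\Phi$ is a bounded Metzler matrix. Lemma~\ref{lem_pos} then gives $y(t)\ge0$, and so $x(t)=\Phi(t)y(t)\ge0$ almost surely for all $t\in[0,T]$; since $T$ is arbitrary, this holds for all $t>0$.

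Two technical points remain. First, Lemma~\ref{lem_pos} is stated for a bounded time-varying matrix, whereas here $K_m(t)$ is only adapted and measurable in $t$. The lemma's proof should still go through for measurable bounded Metzler $A(t)$ with solutions in the Carathéodory sense; I would state this explicitly. Second, the transformed ODE must have the same solution as the strong solution of~\eqref{SDE}. This follows from uniqueness, because $x=\Phi y$ satisfies~\eqref{SDE}: the Itô computation above is reversible.
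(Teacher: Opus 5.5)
Your proposal is correct and follows essentially the same route as the paper. The paper also divides out the multiplicative noise with the pathwise factor $e^{-f_i v_i(t)}$ (note $f_i v_i(t)=\sum_n F_{n,i}w_n(t)$), obtains a random linear ODE with off-diagonal entries $\bar a_{ij}(t)e^{f_jv_j(t)-f_iv_i(t)}\ge 0$, and invokes Lemma~\ref{lem_pos} pathwise. The differences are cosmetic: including the Itô correction $\tfrac12\sum_n F_n^2t$ in $\Phi$ leaves the diagonal unchanged instead of shifting it by $-\tfrac12 f_i^2$, and your uniform treatment avoids the paper's separate $f_i=0$ case. One small slip: the covariation term is $-\sum_n F_n^2\Phi^{-1}x\,dt$, not $-\sum_n F_n^2x\,dt$, though your final ODE is right.
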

\begin{proof}
    Let $a_{ij}$ be the $(i,j)$ element of the matrix $A$. The $i$-th diagonal element of $F_n$ will be denoted by $F_{n,i}$.

    Define $f_i:= \sqrt{\textstyle\sum\nolimits_{n=1}^{N}F_{n,i}^2}$. If $f_i>0$, let 
    \begin{align*}
        v_i(t):= \textstyle\sum\nolimits_{n=1}^N\int_0^{t}\frac{F_{n,i}}{f_i}d w_n(s).
    \end{align*}
    Then $v_i(t)$ is the standard BM for each $i=1,\dots,L$.  If $f_i=0$, the diffusion term in the $i$-th equation vanishes and the positivity of $x_i(t)\geq 0$ follows from Lemma~\ref{lem_pos}.
     
    The $i$-th element of~\eqref{SDE} can be written as 
    \begin{align}
        dx_i(t)=\textstyle\sum\nolimits_{j=1}^L \bar a_{ij}(t)x_j(t)dt+f_i x_i(t)dv_i(t),
    \end{align}
    where $\bar a_{ij}(t)= a_{ij}+\textstyle\sum\nolimits_{m=1}^M [B_m]_{ij}{[K_m(t)]_{jj}}$. For $i \neq j$, Assumption~\ref{ass_1} $(b)$ implies that $a_{ij}-\sum_m |[B_m]_{ij}|e_m\geq 0$. Thus, $\bar a_{ij}(t)\geq 0$ for all $i\neq j$.  Define $y_i(t):=x_i(t)e^{-f_i v_i(t)}$. By the Itô formula~\cite{Oksendal},
    \begin{align*}
        dy_i(t)&=\frac{\partial y_i}{\partial x_i}dx_i(t)+ \frac{\partial y_i}{\partial v_i}dv_i(t)\\
        &+\frac{1}{2}\begin{bmatrix}
            dx_i(t)\\
            dv_i(t)
        \end{bmatrix}^{\top}\begin{bmatrix}
            \frac{\partial^2 y_i}{\partial x_i^2}& \frac{\partial^2 y_i}{\partial x_i\partial v_i}\\
            \frac{\partial^2 y_i}{\partial x_i\partial v_i}& \frac{\partial^2 y_i}{\partial v_i^2}
    \end{bmatrix}\begin{bmatrix}
                dx_i(t)\\
                dv_i(t)
            \end{bmatrix}\\
        &=\left(\textstyle\sum\nolimits_{j=1}^L \bar a_{ij}(t)x_j(t)-\frac{1}{2}f_i^2 x_i(t) \right)e^{-f_i v_i(t)}dt\\
        &= \left(\textstyle\sum\nolimits_{j=1}^L \bar a_{ij}(t)e^{f_j v_j(t)-f_iv_i(t)}y_j(t)-\frac{1}{2}f_i^2 y_i(t) \right)dt
    \end{align*}
    Hence $\frac{dy_i(t)}{dt}=\sum_{j=1}^L \tilde a_{ij}(t)y_j(t)$ where 
    \begin{align*}
        \tilde a_{ij}(t)=\begin{cases}
            \bar a_{ij}(t) e^{f_j v_j(t)-f_iv_i(t)}&j\neq i\\
            \bar a_{ii}(t)-\frac{1}{2}f_i^2&j=i.
        \end{cases}
    \end{align*}
    Observe that $\tilde a_{ij}(t)\geq 0$ for $j\neq i$. Hence, by Lemma~\ref{lem_pos}, if $y_i(0)\geq 0$ for each $i$, then $y_i(t)\geq 0$ for each $i$ for all $t>0$. Therefore, if $x_i(0)=y_i(0)e^{f_i v_i(0)}\geq 0$ for each $i$, then $x_i(t)=y_i(t)e^{f_i v_i(t)}\geq 0$ for each $i$ for all $t>0$.
\end{proof}

If the diffusion term in~\eqref{SDE} were additive and independent of the state, it would not vanish at the boundary, and trajectories could become negative with positive probability. The following example illustrates this phenomenon. 
\begin{figure}[h]
    \centering
    \includegraphics[scale=0.55]{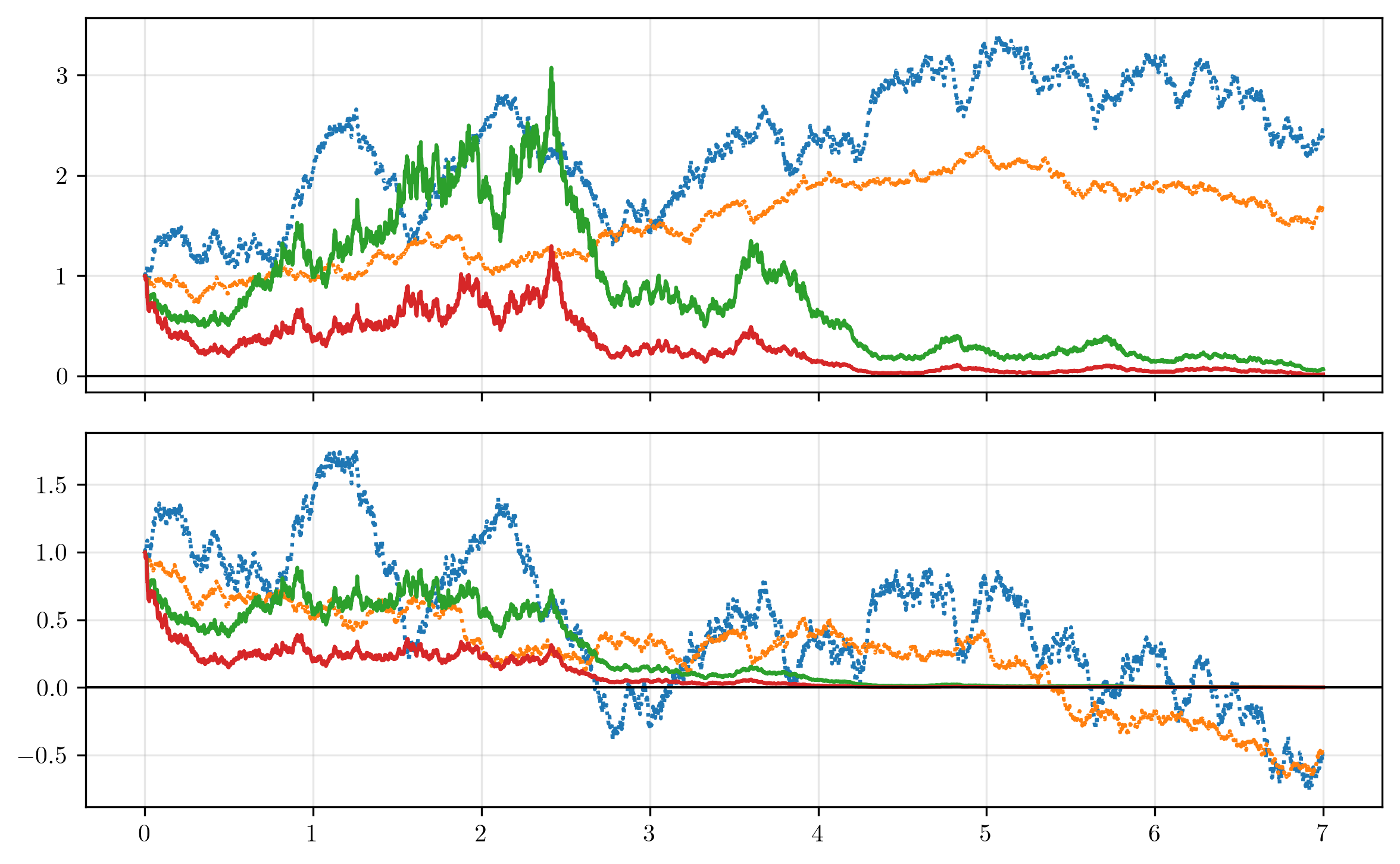}
    \caption{Time evolution of the systems in Example~\ref{ex1}. Top: $k=0$, bottom: $k=-0.6$. System $a)$ trajectories: dotted (blue, orange). System $b)$ trajectories: solid (red, green).}
    \label{exmp}
\end{figure}
\begin{exmp}\label{ex1}
    Consider the system dynamics
    \begin{align} \label{sys_ex1}
        &a) \hspace{1mm}dx_a(t)=(A+BK) x_a(t)dt+\textstyle\sum\nolimits_{n=1}^2 f_n d w_n(t)\\
        &b) \hspace{1mm}dx_b(t)=(A+BK)x_b(t)dt+\textstyle\sum\nolimits_{n=1}^2 D_n x_b(t)d w_n(t) \label{sys_ex2}
    \end{align}
    with $x_a(0)=x_b(0)=x_0 \in \mathbb R^2_+$, $A,B\in \mathbb R^{2\times 2}$, {$K \in \mathbb R^{2\times 2}$, with $K= I_2 k$, $k\in \mathbb R$ and} $D_n=\mathrm{diag}(f_n)$, $f_n\in \mathbb R^2$. Observe that system $b)$ admits a closed-form solution if the matrices ${A+BK}, D_1$ and $D_2$ commute
    \begin{align*}
        x_b(t)=x_0 \mathrm{exp}\Big(((A+BK)-\frac{1}{2}\textstyle\sum\nolimits_{n=1}^2 D_n^2)t+\textstyle\sum\nolimits_{n=1}^2 D_n w_n(t) \Big).
    \end{align*} 
    This solution is nonnegative for all $t$ whenever $x_0\geq 0$. In the non-commuting case, by Theorem~\ref{Thm1} the solution can be simulated using Algorithm~\ref{algorithm_GBM}. Figure~\ref{exmp} illustrates this for $N=2$, with $k=0$ and $k=-0.6$. The system parameters are
    \begin{align*}
      A=\begin{bmatrix}
        0.02&0.07\\
        0.05&0.06
    \end{bmatrix},\hspace{0.5mm}B=\begin{bmatrix}
        1&0.01\\
        -0.01&1
    \end{bmatrix},\hspace{0.5mm} f_1=\begin{bmatrix}
        0.7\\0.9
    \end{bmatrix},\hspace{0.5mm}   f_2=\begin{bmatrix}
        0.01\\0.3
    \end{bmatrix},
    \end{align*} 
    $x_a(0)=x_b(0)=\mathds 1$. We choose the bound $e=0.6$ so that {$|k|\leq e$ for both $k=0$ and $k=-0.6$.} Since $A-|B|I_2 e$ is Metzler, $A+BK$ is Metzler for both inputs. System $a)$ is simulated using the Euler-Maruyama method and System $b)$ using Algorithm~\ref{algorithm_GBM} because ${A+BK}, D_1, D_2$ do not commute.
\end{exmp}
\subsection{Positivity Preserving Simulation}
Although the SDE~\eqref{SDE} preserves the positive orthant under the assumptions of Theorem~\ref{Thm1}, standard discretization schemes generally do not. For example, the Euler–Maruyama method may produce negative iterates even for scalar GBM (Example~\ref{exmp_EM}).

\begin{algorithm}[H]
\caption{Positivity-preserving simulation for~\eqref{SDE}}
\label{algorithm_GBM}
\begin{algorithmic}[1]
\State \textbf{Input:} $T>0$, $N_t\in\mathbb N$, $x_0\in\mathbb R^L_+$;
$A=[a_{ij}]$; $B_m$ ($m=1{:}M$); $F_n$ ($n=1{:}N$) diagonal;
{diagonal matrices $K_m(t)\in \mathbb R^{L\times L}$ with $|K_m(t)|\le I_L e_m$ for $m=1:M$};
\textbf{assume} $A-\sum_{m=1}^M e_m|B_m|$ is Metzler.
\State $\Delta t\gets T/N_t$, $t_k\gets k\Delta t$; \quad $v(t_0)\gets 0$, $y(t_0)\gets x_0$
\State $f_i\gets\sqrt{\sum_{n=1}^N F_{n,i}^2}$ for $i=1{:}L$
\For{$k=0,\dots,N_t-1$}
    \State Sample $\xi_{n,k}\sim\mathcal N(0,1)$; set $\Delta w_{n,k}\gets\sqrt{\Delta t}\,\xi_{n,k}$ for $n=1{:}N$
    \For{$i=1{:}L$}
        \If{$f_i=0$}
            \State $v_i(t_{k+1})\gets v_i(t_k)$
        \Else
            \State $v_i(t_{k+1})\gets v_i(t_k)+\sum_{n=1}^N \frac{F_{n,i}}{f_i}\Delta w_{n,k}$
        \EndIf
    \EndFor
    \State Form $\bar A(t_k)\gets A+\sum_{m=1}^M B_m\,K_m(t_k)$
    \State $\tilde a_{ij}(t_k)\gets \bar a_{ij}(t_k)\exp(f_jv_j(t_k)-f_iv_i(t_k))$ for $i\neq j$
    \State $\tilde a_{ii}(t_k)\gets \bar a_{ii}(t_k)-\tfrac12 f_i^2$
    \State \scalebox{1}{$\widetilde A_k\gets[\tilde a_{ij}(t_k)]_{i,j=1}^L$}; \quad 
           \scalebox{1}{$y(t_{k+1})\gets \exp(\widetilde A_k\Delta t)\,y(t_k)$}
\EndFor
\State $x_i(t_k)\gets y_i(t_k)\exp(f_iv_i(t_k))$ for $i=1{:}L$, $k=0{:}N_t$
\State \textbf{Output:} $\{x(t_k)\}_{k=0}^{N_t}$
\end{algorithmic}
\end{algorithm}

In the scalar case, positivity can be preserved by exploiting the closed-form representation of the solution, which enables simulation by generating the Gaussian exponent and exponentiating it. Other approaches for preserving positivity in the scalar case   
include logarithmic transformations~\cite{Highman}. However, analytical solutions are generally unavailable for multivariable systems with multiplicative noise, making this extension nontrivial.  The proposed method, Algorithm~\ref{algorithm_GBM}, generalizes this idea and preserves the forward invariance established in Theorem~\ref{Thm1}.
\begin{exmp}\label{exmp_EM}
    Consider the scalar GBM $dx(t)=\alpha x(t)dt+ \sigma x(t)dw(t), \hspace{1mm} x(0)>0$. Its exact solution is $x(t)=x(0)\operatorname{exp}\left((\alpha-0.5\sigma^2)t+\sigma w(t) \right)$, which is strictly positive almost surely for all $t\geq 0$. However, Euler–Maruyama discretization gives $x_{k+1}=x_k+\alpha x_k\Delta t+\sigma x_k \Delta w_k$. Since $\Delta w_k \sim \mathcal N(0, \Delta t)$ is unbounded from below, the factor $(1+\alpha \Delta t + \sigma \Delta w_k)$ can be negative with positive probability. Thus, the discrete-time approximation may leave the positive orthant even though the continuous-time solution does not.
\end{exmp}
\section{$L_1$ OPTIMAL CONTROL PROBLEM}
In this section, we present the stochastic optimal control
problem central to this paper. Define the set of all admissible time-state pairs as $\mathcal{Q}=[0, T] \times \mathbb R^L_+$. The class of admissible control policies $\mathcal U(x{(\cdot)})$ is defined by
\begin{align}\label{admissible_set}
    \mathcal U(x{(\cdot)})= \left\{ u(\cdot): \hspace{0.1mm} \begin{matrix}
        &|u_m(t)|\leq e_m x(t), \hspace{0.1mm} \forall \hspace{0.11mm} m,t
    \end{matrix} \right\},
\end{align}
with $e_m \in \mathbb R_+$ nonnegative scalar for all $m=1,\dots, M$, {where, again $u(\cdot)$ is such that  conditions of~\cite[Def. 3.1.4]{Oksendal} and~\cite[Thm. 5.2.1]{Oksendal} are satisfied.} We are concerned with the following stochastic optimal
control problem with zero terminal cost:
\begin{subequations}\label{optprob}
\begin{align}
    &\min_{u \in \mathcal U(x{(\cdot)})}\mathbb E \Big[ \int_0^{T} \big( q^{\top}x(t)+\textstyle\sum\nolimits_{m=1}^M r^{\top}_m u_m(t) \big) dt\Big]\\
    &\hspace{2mm}\text{s.t.}\hspace{1mm} \text{Equation}~\eqref{SDE}, \hspace{1mm} \hspace{1mm} x_0\in \mathbb R^L_+, \hspace{1mm} u(t)\in \mathcal U(x{(\cdot)}),
\end{align}
\end{subequations}
with $q \in \mathbb R_+^L$ and $r_m \in \mathbb R^{L}$ for $m=1,\dots, M$. 

We propose a dynamic programming approach to characterize the optimal control policy for~\eqref{SDE}. For each $(t,x)\in \mathcal Q$ and $u \in \mathcal U(x{(\cdot)})$, define the cost-to-go function by
\begin{align}\label{cost}
    C(t,x,u)= \mathbb E \Big[\int_t^T  \big( q^{\top}x(s)+\textstyle\sum\nolimits_{m=1}^M r^{\top}_m u_m(s) \big) ds\Big].
\end{align}
We also define the value function $J: \mathcal Q \rightarrow \mathbb R_+$ by $$J(t,x)=\inf_{u\in \mathcal U(x{(\cdot)})} C(t,x,u).$$

The next theorem is the main result of this section, which summarizes the existence of an optimal control policy and its 
characterization in terms of the stochastic Hamilton-Jacobi-Bellman equation:
\begin{thm}\label{thm_finite}
    Let the matrices $A, B_m, F_n \in \mathbb R^{L\times L}$ satisfy the properties in Assumption~\ref{ass_1}, and assume that
    $q \in \mathbb R^L_+$, and $r_m\in \mathbb R^L$, $m=1,\dots M$. Suppose that \begin{align}\label{ass_nonneg}
        q> \textstyle\sum\nolimits_{m=1}^{M}|r_m|e_m.
    \end{align}
    Let $p:[0,T]\rightarrow \mathbb R^L_+$ be the unique solution to the backward ODE
    \begin{align}\label{ODE}
            -\dot{p}(t)&=q+A^{\top}p(t)-\textstyle\sum\nolimits_{m=1}^M e_m|r_m+B^{\top}_m \hspace{0.5mm}p(t)|\\
            p(T)&=0.\notag
        \end{align}
    Then the following hold: 
    \begin{enumerate}[(i)]
        \item  $J(t,x)=p^{\top}(t)x$ is the value function for the optimal control problem~\eqref{optprob}.
        \item Any optimal control is given by {$u^*_m(t)=-K_m(t)x(t)$ where $K_m$ satisfies~\eqref{opt_u_fin} for almost every $t\in [0,T]$.}
    \begin{align}\label{opt_u_fin}
       {K_m(t)\in}\; \mathrm{diag}(\mathrm{sign}(r_m^{\top}+p^{\top}(t)B_m))e_m.
    \end{align}  
    \end{enumerate}
\end{thm}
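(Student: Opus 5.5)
The argument is a verification theorem for the stochastic HJB equation with the linear candidate $V(t,x)=p^{\top}(t)x$. The key observation is that $V$ is linear in $x$, so $\partial^2 V/\partial x^2=0$ and the Itô correction from the multiplicative diffusion vanishes. The diffusion term then contributes only a stochastic integral $\sum_n p^{\top}(t)F_n x(t)\,dw_n(t)$, which has zero expectation. This is why the stochastic problem reduces to the deterministic one.

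\textbf{Step 1: the ODE.} I first check that \eqref{ODE} has a unique solution on $[0,T]$ with $p(t)\ge 0$. The right-hand side is globally Lipschitz in $p$ (absolute values are Lipschitz), so a unique solution exists on all of $[0,T]$. For nonnegativity, set $s=T-t$ and write $e_m|r_m+B_m^{\top}p|=e_m\sigma_m(t)\circ(r_m+B_m^{\top}p)$ with $\sigma_m\in\{\pm1\}^L$ chosen measurably. The reversed equation becomes
\[
\tfrac{dp}{ds}=\Big(A^{\top}-\textstyle\sum_m e_m\,\mathrm{diag}(\sigma_m)B_m^{\top}\Big)p+\Big(q-\textstyle\sum_m e_m\,\mathrm{diag}(\sigma_m) r_m\Big).
\]
The matrix $A^{\top}-\sum_m e_m\mathrm{diag}(\sigma_m)B_m^{\top}$ is Metzler, since its off-diagonal entries dominate those of $(A-\sum_m|B_m|e_m)^{\top}$, which is Metzler by Assumption~\ref{ass_1}(b). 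By \eqref{ass_nonneg} the forcing term is positive. A variation-of-constants argument combined with Lemma~\ref{lem_pos} then gives $p\ge 0$, and in fact $p(t)>0$ for $t<T$.

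\textbf{Step 2: the HJB equation.} I verify that $V$ satisfies
\[
0=\partial_t V+\min_{|u_m|\le e_m x}\Big[q^{\top}x+\textstyle\sum_m r_m^{\top}u_m+p^{\top}\big(Ax+\textstyle\sum_m B_m u_m\big)\Big],
\]
using that the second-order term is zero. For $x\ge 0$ the minimization decouples componentwise. The minimum of $(r_m+B_m^{\top}p)_i u_{m,i}$ over $|u_{m,i}|\le e_m x_i$ equals $-e_m|(r_m+B_m^{\top}p)_i|x_i$. It is attained exactly at $u_{m,i}=-e_m\,\mathrm{sign}((r_m+B_m^{\top}p)_i)x_i$, with any value in $[-e_m x_i,e_m x_i]$ allowed when the coefficient vanishes. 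This gives \eqref{opt_u_fin}, and the bracket reduces to $-\dot p^{\top}x$ by \eqref{ODE}.

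\textbf{Step 3: Dynkin/Itô over $[t,T]$.} Fix an arbitrary admissible $u$. The policy has the state-scaled form $u_m=K_mx$ required by Theorem~\ref{Thm1}, so that theorem gives $x(s)\ge 0$ almost surely. Hence the componentwise minimization is legitimate and the instantaneous cost dominates $-dV$ in expectation. Applying Itô's formula to $V(s,x(s))$, using $p(T)=0$, and taking expectations yields
\[
C(t,x,u)=p^{\top}(t)x+\mathbb E\int_t^T\Big[\text{(HJB bracket at }u)-\text{(its minimum)}\Big]ds\;\ge\;p^{\top}(t)x.
\]
Equality holds if and only if the bracket attains its minimum for almost every $s$, almost surely. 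This gives (i) and (ii), including the ``any optimal control'' claim. The strict positivity of the coefficients, from $q>\sum|r_m|e_m$, is what keeps the characterization exact. The policy $u^*$ is admissible because $K_m$ is bounded and measurable.

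\textbf{Main obstacle.} The technical point I expect to be hardest is justifying that the stochastic integral has zero mean. This needs $\mathbb E\int_t^T|p^{\top}F_nx|^2ds<\infty$. I plan to derive it from the linear-growth and Lipschitz conditions of \cite[Thm.~5.2.1]{Oksendal}, which give $\mathbb E\sup_s|x(s)|^2<\infty$. If that estimate fails to apply directly, the fallback is to localize with stopping times $\tau_k=\inf\{s:|x(s)|\ge k\}$ and pass to the limit $k\to\infty$ using monotone and dominated convergence. Both are available because the running cost is nonnegative on the orthant, since $q^{\top}x+\sum_m r_m^{\top}u_m\ge(q-\sum_m|r_m|e_m)^{\top}x\ge 0$.
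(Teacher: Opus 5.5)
Your proposal is correct and follows the same verification route as the paper. It applies Dynkin's formula to the linear candidate $p^{\top}(t)x$, uses that the second-order term vanishes, and uses the componentwise pointwise minimization to show $C(t,x,u)\ge p^{\top}(t)x$, with equality exactly under \eqref{opt_u_fin}. Your additional pieces fill in details the paper cites from \cite{LR_paperII} or asserts without proof: the Metzler/variation-of-constants argument for $p\ge 0$, the appeal to Theorem~\ref{Thm1} for $x(s)\ge 0$, and the square-integrability justification of the zero-mean stochastic integral.
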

\begin{rem}
    Condition~\eqref{ass_nonneg} ensures that the cost function is bounded from below.
\end{rem}
\begin{rem}
    The right-hand side of~\eqref{opt_u_fin} is set-valued at indices $j$ where $(r_m)_j+(p(t)^{\top}B_m)_j=0$. In these cases $[K_m(t)]_{jj}$ may take any value in $[-e_m, e_m ]$. This is a singular arc, as the running cost is linear, hence not strictly convex in the control variable. The Hamiltonian is affine in $u_m$, and therefore the first-order optimality conditions do not uniquely determine the optimal control. Nonetheless all solutions yield the same $p(t)$ of~\eqref{ODE}.
\end{rem}
\begin{rem}\label{rem4}
    The policy~\eqref{opt_u_fin} coincides with the optimal control law of the deterministic setting~\cite[Thm. 1]{LR_paperII}, {demonstrating that the optimal controller is robust to the multiplicative stochastic uncertainty in~\eqref{SDE}. This equivalence is a structural consequence of the linearity of the value function candidate and problem setting, which makes the second-order term in the HJB equation vanish. This deterministic-equivalence interpretation is conceptually related to the certainty-equivalence phenomenon established for discrete-time semilinear dynamic programming in~\cite[Sec. 5]{semilinear}, where the value iteration and optimal policy for certain stochastic problems coincide with those of the corresponding deterministic problem obtained by replacing random parameters with their expectations.}
\end{rem}

\begin{rem}
    An equivalent maximization problem (see~\cite[Thm. 1]{LR_paperII} with $u,w=0$) can be obtained by changing the sign of the running cost,  
    \begin{align}\label{max_obj}
        \max_{u \in \mathcal U(x{(\cdot)})}\mathbb E \Big[ \int_0^{T} \big(q^{\top}x(t)- \textstyle\sum\nolimits_{m=1}^M r_m^{\top} u_m(t) \big)dt \Big].
    \end{align}
    Assuming $q> -\textstyle\sum\nolimits_{m=1}^{M}|r_m|e_m$, the value function remains $J(t,x)=p^{\top}(t)x$, and the vector $p(t)$ satisfies 
    \begin{align*}
        -\dot{p}(t)=q+A^{\top}p(t)+\textstyle\sum\nolimits_{m=1}^M e_m|-r_m+B^{\top}_m p(t) |
    \end{align*}
    where $u^*_m(t)=-K_m(t)x(t)$, with $K_m(t) \in \mathrm{diag}(\mathrm{sign}(-r_m^{\top}+p^{\top}(t)B_m))e_m.$
\end{rem}
\begin{proof} We prove Theorem~\ref{thm_finite}. {By the Picard–Lindelöf theorem~\cite{PicardLind}, the ODE~\eqref{ODE} $p:[0,T]\rightarrow \mathbb R_+^L$ admits a unique solution and nonnegativity of $p(t)$ on $[0,T]$  follows from the monotone-systems argument of~\cite[Lem. 2]{LR_paperII}.} Define $J(t,x)=p^{\top}(t)x$.  Since $J(t,x)\in C^{1,2}(\mathcal Q)$, the verification theorem~\cite[Thm. 8.1]{viscosity_fleming} applies. By Dynkin's formula for the graph of Itô diffusion~\cite[(2.7)]{viscosity_fleming}
    \begin{align*}
        &\mathbb E^{t,x}[J(T, x(T))]-J(t,x)=\mathbb{E}^{t,x}\Big[\int_t^T \mathcal A^{u} J(s, x(s))ds \Big],
\end{align*} 
where the backward evolution operator $\mathcal A^{u}$ applied to $J$ is
\begin{align}\label{operator_A}
    \mathcal A^{u}J
    &= \partial_s J 
    + (Ax + \textstyle\sum\nolimits_{m=1}^M B_m u_m)^{\top}\partial_x J \\
    &~~~~~~~~~~~~~~~~~~~~~\quad + \tfrac12 \textstyle\sum\nolimits_{n=1}^N 
    \mathrm{Tr}\hspace{0.1mm}\big(F_n x x^{\top} F_n^{\top}\partial_x^2 J\big). \notag
\end{align}
    Substituting $J(t,x)=p(t)^{\top}x$, we note that the second-order term vanishes ($\partial^2_x J=0$) and the stochastic integral has zero expectation. This gives
    \begin{align}\label{18}
    &J(t,x)=
    -\mathbb{E}^{t,x}\!
    \Big[\int_{t}^{T}
    \big[
    \dot p(s)^{\top}x(s)
    \\
    &~~~~~~~~~\quad
    +p(s)^{\top}\big(Ax(s)+\textstyle\sum\nolimits_{m=1}^M B_m u_m(s))
    \big]
    \, ds\Big].\notag
    \end{align}
    Observe that the right-hand side of the ODE~\eqref{ODE} can be expressed as 
    \begin{align*}
    -\dot p^{\top}x
    = \min_{u}
    \Big(
    q^{\top}x 
    + \textstyle\sum\nolimits_{m=1}^M r_m^{\top}u_m
    + p^{\top}(Ax + \textstyle\sum\nolimits_{m=1}^M B_mu_m)
    \Big).
    \end{align*}
Therefore, for an arbitrary admissible $u$ we have
    \begin{align}\label{20}
      -\dot p^{\top}x
    \leq q^{\top}x + \textstyle\sum\nolimits_{m=1}^M r_m^{\top}u_m  + p^{\top}(Ax + \textstyle\sum\nolimits_{m=1}^M B_m u_m)
    \end{align}
where the equality holds if and only if 
\begin{align*}
\min_{|u_m(t)| \le e_m x(t)}
\alpha_m(t)u_m(t)
= -|\alpha_m(t)|\,e_m x(t).
\end{align*}
$\alpha_m(t) := r_m^{\top} + p^{\top}(t)B_m$, for all $m=1, \dots M$ and $t\in [0,T]$, equivalently when~\eqref{opt_u_fin} holds. Combining~\eqref{18} and~\eqref{20} 
\begin{align}\label{22}
J(t,x)
&\leq 
 \mathbb{E}^{t,x}\Big[
\int_{t}^{T}
\Big(
q^{\top}x(s)
+ \textstyle\sum\nolimits_{m=1}^M r_m^{\top} u_m(s)
\Big)
ds 
\Big]
\notag\\
&= C\!\left(t,x,u\right).
\end{align}
Since~\eqref{22} holds with equality if and only if~\eqref{opt_u_fin} is satisfied, statement $(ii)$ also follows.
\end{proof}
\section{Discounted Infinite-Horizon $L_1$ Optimal Control Problem}
In this section we consider the problem of minimizing an infinite-horizon discounted expected cost with discount factor $\beta\geq 0$. In particular, the optimal control problem is 
\begin{subequations}\label{optprob_inf}
\begin{align}
    &\min_{u \in \mathcal U(x{(\cdot)})}\mathbb E \big[ \int_0^{\infty} e^{-\beta t} \big( q^{\top}x(t)+\sum_{m=1}^M r^{\top}_m u_m(t) \big) dt \big]\\
    &\hspace{2mm}\text{s.t.}\hspace{1mm} \text{Equation}~\eqref{SDE}, \hspace{1mm} \hspace{1mm} x_0\in \mathbb R^L_+, \hspace{1mm} u(t)\in \mathcal U(x{(\cdot)}).
\end{align}
\end{subequations}
\begin{thm}\label{thm_infty}
    Let the matrices $A, B_m, F_n \in \mathbb R^{L\times L}$ satisfy the properties in Assumption~\ref{ass_1}, and assume that $q \in \mathbb R^L_+$, and $r_m\in \mathbb R^L$, $m=1,\dots, M$. Suppose that~\eqref{ass_nonneg} holds. Then, there exists $p \in \mathbb R^L_+$ such that 
        \begin{align}\label{ARE}
            \beta p &=q+A^{\top}p-\textstyle\sum\nolimits_{m=1}^M e_m|r_m+B^{\top}_m \hspace{0.5mm}p|. 
        \end{align}
        Let $x(t)$ be the state trajectory under the control policy {$u^*_m(t)=-K_m x(t)$ with}
        \begin{align}\label{opt_control_inf}
        {K_m  \in}\;\mathrm{diag}(\mathrm{sign}(r_m^{\top}+p^{\top}B_m))e_m 
        \end{align}
        where $p$ is the vector satisfying \eqref{ARE}, and suppose that \begin{align}\label{ass_discounted}
        \lim_{T\rightarrow \infty} \mathbb E \left[ e^{-\beta T}p^{\top} x(T)\right]=0
    \end{align}
     is satisfied {for every $u\in \mathcal U(x(\cdot))$.} Define $J(x)=p^{\top}x$. Then the following hold: 
    \begin{enumerate}[(i)]
        \item  $J(x)=p^{\top}x$ is the value function of the optimal control problem~\eqref{optprob_inf}.
        \item The optimal control policy is $u_m^*(t)=-K_m x(t)$ with $K_m$ given by~\eqref{opt_control_inf}. 
    \end{enumerate}
\end{thm}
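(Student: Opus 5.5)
The proof has two parts: first establish existence of a nonnegative solution $p$ of \eqref{ARE}, then run the same verification argument as in Theorem~\ref{thm_finite}, this time for the discounted cost and using \eqref{ass_discounted} to remove the terminal term.

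\emph{Existence.} I would obtain $p$ as a limit of solutions of the finite-horizon ODE \eqref{ODE} with discounting. Write $\bar p_T(t)$ for the solution of
\[
-\dot{\bar p}=q+(A-\beta I)^{\top}\bar p-\textstyle\sum_m e_m|r_m+B_m^{\top}\bar p|, \qquad \bar p(T)=0 .
\]
Since $A-\beta I$ still satisfies Assumption~\ref{ass_1}(b), nonnegativity follows exactly as in the proof of Theorem~\ref{thm_finite} (via the argument of \cite[Lem.~2]{LR_paperII}). The map $\bar p\mapsto q+(A-\beta I)^{\top}\bar p-\sum_m e_m|r_m+B_m^\top\bar p|$ is monotone in the Metzler sense, because $A-\sum_m e_m|B_m|$ is Metzler and the off-diagonal Jacobian entries are bounded below by those of this matrix. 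Hence $\bar p_T(0)$ is nondecreasing in $T$.

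The main obstacle is boundedness of $\bar p_T(0)$ as $T\to\infty$. This bound does not follow from the listed hypotheses alone. I would obtain it from the cost representation: $\bar p_T(0)^\top x_0$ equals the optimal discounted cost over $[0,T]$, which is at most the cost of any fixed admissible policy. Then \eqref{ass_discounted}, applied via Dynkin's formula, forces that cost to be finite. Equivalently, one can appeal to the deterministic result \cite[Thm.~2]{LR_paperII} with $A$ replaced by $A-\beta I$, which asserts existence under the corresponding stabilizability-type condition. With boundedness in hand, the monotone limit $p=\lim_{T\to\infty}\bar p_T(0)\ge0$ exists, and passing to the limit in the integral form of the ODE shows that $p$ is an equilibrium, i.e.\ it satisfies \eqref{ARE}.

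\emph{Verification.} Let $V(t,x)=e^{-\beta t}p^\top x$, which lies in $C^{1,2}$. Apply Dynkin's formula with the operator \eqref{operator_A} on $[0,T]$. As before, the second-order term vanishes and the stochastic integral has zero mean. This gives
\[
\mathbb E[e^{-\beta T}p^\top x(T)]-p^\top x_0=\mathbb E\Big[\int_0^T e^{-\beta s}\big(-\beta p^\top x+p^\top(Ax+\textstyle\sum_m B_mu_m)\big)ds\Big].
\]
By \eqref{ARE}, for every $x\ge0$ (nonnegativity of $x(s)$ holds almost surely by Theorem~\ref{Thm1}),
\[
\beta p^\top x=\min_{|u_m|\le e_mx}\Big(q^\top x+\textstyle\sum_m r_m^\top u_m+p^\top(Ax+\sum_mB_mu_m)\Big).
\]
Therefore the integrand satisfies
\[
-\beta p^\top x+p^\top(Ax+\textstyle\sum_m B_mu_m)\ge -\big(q^\top x+\sum_m r_m^\top u_m\big),
\]
with equality exactly when $u_m=-K_mx$ and $K_m$ is as in \eqref{opt_control_inf}. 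This is the same scalar argument as in \eqref{20}, using $\alpha_m=r_m^\top+p^\top B_m$.

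Substituting, we get
\[
p^\top x_0\le \mathbb E\Big[\int_0^T e^{-\beta s}\big(q^\top x+\textstyle\sum_m r_m^\top u_m\big)ds\Big]+\mathbb E[e^{-\beta T}p^\top x(T)],
\]
with equality under $u^*$. Now let $T\to\infty$. The running cost is nonnegative by \eqref{ass_nonneg} and $x\ge0$, so monotone convergence applies to the integral, and the terminal term vanishes by \eqref{ass_discounted}. This yields $p^\top x_0\le C(x_0,u)$ for every admissible $u$, with equality for $u^*$. That proves (i). Statement (ii) follows from the equality characterization.
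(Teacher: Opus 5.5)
\textbf{Verification step.} This part is the paper's proof: Dynkin's formula applied to $e^{-\beta t}p^\top x$, the vanishing second-order term, \eqref{ARE} rewritten as a pointwise minimum over $|u_m|\le e_m x$, and \eqref{ass_discounted} to remove the terminal term. You also use \eqref{ass_nonneg} to make the running cost nonnegative, so monotone convergence justifies letting $T\to\infty$; the paper takes that limit without comment.

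\textbf{Existence of $p$.} The paper's proof does not establish the existence assertion at all. It opens with ``Assume that there exists $p\in\mathbb R^L_+$ such that \eqref{ARE} and \eqref{ass_discounted} hold''. Existence is supplied only afterwards, by the lemma quoted from \cite{LR_paperII}, under $(e_1,\dots,e_M)$-stabilizability.

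\textbf{The gap.} Your proposal fails at the first way you suggest to bound $\bar p_T(0)$. Invoking \eqref{ass_discounted} ``via Dynkin's formula'' is circular: \eqref{ass_discounted} is a condition on the vector $p$ solving \eqref{ARE}. Any bound obtained from it therefore presupposes the very object you are constructing. So your two routes are not equivalent, and only the second can work.

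In fact, no argument can derive existence from the listed hypotheses. Take $L=M=N=1$, $B_1=0$, $r_1=0$, $q=1$ and $A=a>\beta$. Every assumption holds, yet \eqref{ARE} reads $(\beta-a)p=1$, which forces $p<0$.

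\textbf{The repair.} An extra hypothesis is needed, as in your second route. Suppose there are diagonal $K_m$ with $|K_m|\le I_Le_m$ such that $A_K:=A-\sum_mB_mK_m-\beta I$ is Hurwitz; this is implied by $(e_1,\dots,e_M)$-stabilizability because $\beta\ge0$. Then:
\begin{itemize}
\item $A_K$ is Metzler;
\item $\mathbb E[e^{-\beta t}x(t)]=e^{A_Kt}x_0$ under $u_m=-K_mx$;
\item with $c:=q-\sum_mK_mr_m>0$, you get $\bar p_T(0)^\top x_0\le c^\top(-A_K)^{-1}x_0$ uniformly in $T$.
\end{itemize}
With this bound, your monotone-limit construction is a complete, self-contained existence proof. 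Its ingredients are the cooperative vector field, $\bar p_T(0)$ nondecreasing in $T$, and passage to the limit in the integral equation. That is more than the paper itself gives, since it simply assumes $p$.
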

\begin{proof}
    Assume that there exists $p\in \mathbb R^L_+$ such that~\eqref{ARE} and~\eqref{ass_discounted} hold.
    Define $J(x)=p^{\top}x$. Since $J(x)\in C^{1,2}(\mathcal Q)$, the verification theorem~\cite[Lem. 9.1]{viscosity_fleming} applies. Hence, by Dynkin's formula
    \begin{align*}
        &\mathbb E^{x}[e^{-\beta T} J(x(T))]= J(x)+\mathbb E^x \Big[\int_0^Te^{-\beta t}\left( \mathcal G^{u} J(x(t))-\beta J(x(t))\right)dt \Big]
\end{align*}
where the operator $\mathcal G^{u}$ applied to $J$ is 
\begin{align*}
    \mathcal G^{u}J
    &= (Ax + \textstyle\sum\nolimits_{m=1}^M B_m u_m(t))^{\top}
    \partial_x J \\
    &~~~~~~~~~~~~~~~~~~\quad + \tfrac12 \textstyle\sum\nolimits_{n=1}^N 
    \mathrm{Tr}\hspace{0.1mm}\big(F_n x x^{\top}
    F_n^{\top}\partial_x^2 J\big).
\end{align*}
Substituting $J(x)=p^{\top}x$, the second-order term vanishes ($\partial^2_x J=0$) and the stochastic integral has zero expectation. 
\begin{align}\label{27}
        &J(x)=\mathbb E^{x}[e^{-\beta T} J(x(T))]+\mathbb{E}^{x}\Big[
    \int_{0}^{T}
    e^{-\beta t}\Big(p^{\top}\bigl(Ax(t)
    \notag \\
    &~~~~~~~~~~~~~~~~~~~~~~~\qquad+\textstyle\sum\nolimits_{m=1}^M B_m u_m(t)\bigr)
    -\beta p^{\top}x(t)
    \Big)
    \, dt
    \Big].
\end{align} 
It is possible to express the right-hand side of the algebraic equation~\eqref{ARE} as
\begin{align*}
    0= \min_{u}
    \Big(
    q^{\top}x 
    + \sum_{m=1}^M r_m^{\top}u_m
    + p^{\top}(Ax + \sum_m B_mu_m-\beta x)\Big).
\end{align*}
Thus, for any admissible $u$ it is true that
\begin{align}\label{29}
    0\le q^{\top}x + \sum_m r_m^{\top}u_m  + p^{\top}(Ax + \sum_m B_m u_m-\beta x(t))
\end{align}
with equality if and only if 
\begin{align*}
    &\min_{|u_m|\le e_m x} \left( r_m^{\top}+p^{\top}B_m \right)u_m= -|r_m^{\top}+p^{\top}B_m|e_m x.
\end{align*}
for all $m$. Combining~\eqref{27} and~\eqref{29} we obtain
\begin{align*}
    &J(x)\leq \mathbb E^{x}[e^{-\beta T} J(x(T))]\\
    &~~~~~~+ \mathbb{E}^{x}\Big[
    \int_{0}^{T}
    e^{-\beta t}\Big(
    q^{\top}x(t)
    + \textstyle\sum\nolimits_{m=1}^M r_m^{\top} u_m(t)
    \Big)
    dt 
    \Big].
    \notag
\end{align*}
By assumption~\eqref{ass_discounted}, as $T\rightarrow \infty$ yields 
\begin{align*}
   J(x)\le 
\mathbb E \hspace{0.1mm}\Big[\int_{0}^{\infty} e^{-\beta t}
\Big(q^{\top}x(t)+\textstyle\sum\nolimits_{m=1}^M r_m^{\top}u_m(t)\Big)dt\Big]
\end{align*}
for every admissible $u$. Indeed, the equality holds if and only if~\eqref{opt_control_inf} is satisfied. Hence, statement $(ii)$ also follows.
\end{proof}
Theorem~\ref{thm_infty} relies on the transversality condition~\eqref{ass_discounted}. The following remark proposes a sufficient condition that ensures this condition is satisfied.
\begin{rem}
    Define $\bar A=  A+\textstyle\sum\nolimits_{m=1}^M|B_m |e_m$, which is Metzler by Assumption~\ref{ass_1}. For any state trajectory $x(t)\in \mathbb R^{L}_+$ and admissible control satisfying $|u_m(t)|\leq e_m x(t)$, the inequality $ Ax(t)+\sum_{m=1}^M B_mu_m(t) \leq \bar Ax$ holds. Assume $\beta > \alpha (\bar A)$. Under this condition, $\mathbb E[p^{\top}x(t)]\leq C e^{\alpha(\bar A)t}$ for some $C>0$, and hence~\eqref{ass_discounted} holds.
\end{rem}
In the undiscounted formulation, $\beta=0$, the transversality condition becomes $\lim_{{T} \to \infty}\mathbb E[p^{\top}x(T)]=0$. This condition implies that the state trajectory converges to the origin under an admissible control policy. Therefore, in order for Theorem~\ref{thm_infty} to be applicable in the undiscounted setting, the system must admit a stabilizing control law. This motivates introducing the notion of $(e_1,\dots, e_M)$-stabilizability. 
\begin{defn}
    Let $A\in \mathbb R^{L \times L}$, $B_m\in \mathbb R^{L\times L}$ and $e_m\in \mathbb R_+$, $m=1,\dots M$. We say that a tuple $(A,B_1,\dots,B_M)$ is $(e_1,\dots, e_M)$-stabilizable if there exist diagonal feedback gain matrices 
    $K_m\in \mathbb R^{L\times L}$ with $|K_m|\leq I_L e_m$, $m=1,\dots M$ such that $A-B_1 K_1-\dots-B_M K_M$ is Hurwitz.
\end{defn}
\begin{rem}
    To verify the $(e_1,\dots, e_M)$- stabilizability of the tuple $(A, B_1,\dots B_M)$, analogous to~\cite{LR_paperII}, it can be shown that it is necessary and sufficient to verify the feasibility of 
    \begin{align*}
        Ax+\textstyle\sum\nolimits_{m=1}^M B_m u_m \leq -\mathds 1, \hspace{1mm} {- e_mx \leq u_m \leq e_m x}.
    \end{align*}
    This feasibility problem can be verified by any linear program solver.
\end{rem}
The following result shows that the solution of the algebraic equation~\eqref{ARE} is equivalent to the solution of~\eqref{LP}. 
\begin{lem}[Cor. 8 and Thm. 9 ~\cite{LR_paperII}]
    Suppose that~\eqref{ass_nonneg} holds and the tuple $(A, B_1,\dots, B_M)$ is $(e_1,\dots, e_M)$-stabilizable. Then~\eqref{ARE} has a solution $p\geq 0$. The vector $p\geq 0$ solves~\eqref{ARE} if and only if $p$ maximizes linear program
    \begin{align}\label{LP}
        &\mathrm{Maximize} \hspace{2mm} \mathds{1}^{\top}p \hspace{1mm} \mathrm{over} \hspace{1mm} p \in \mathbb{R}^{L}_{+}, \hspace{1mm} \zeta_m \in \mathbb{R}^{L}_{+}, \hspace{1mm} \forall m \notag\\
        &\mathrm{Subject} \hspace{1mm} \mathrm{to} \hspace{2mm} A^{\top}p \geq \textstyle\sum\nolimits_{m=1}^M e_m\zeta_m -q +\beta p  \\
        &\hspace{15mm} -\zeta_m \leq r_m +B_m^{\top}p \leq \zeta_m . \notag
    \end{align} 
\end{lem}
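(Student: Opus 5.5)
The plan has two parts. The first is existence of a solution $p\ge 0$ of \eqref{ARE}. The second is the equivalence between solving \eqref{ARE} and maximizing the linear program \eqref{LP}.

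\textbf{Reformulating the LP.} First I would eliminate the variables $\zeta_m$. For fixed $p$, the constraint $-\zeta_m\le r_m+B_m^{\top}p\le\zeta_m$ means exactly $\zeta_m\ge|r_m+B_m^{\top}p|$. Since $e_m\ge 0$, the best choice is the smallest feasible $\zeta_m$, namely $\zeta_m=|r_m+B_m^{\top}p|$. So the feasible set in $p$ is
\[
\mathcal P:=\{p\ge 0:\ \Phi(p)\ge 0\},\qquad \Phi(p):=q+(A^{\top}-\beta I)p-\textstyle\sum_m e_m|r_m+B_m^{\top}p|.
\]
This set is nonempty, because $\Phi(0)=q-\sum_m e_m|r_m|>0$ by \eqref{ass_nonneg}.

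\textbf{Key structural facts.} The map $\Phi$ is concave, and it is monotone in the off-diagonal entries. To see the monotonicity, write $|r_m+B_m^{\top}p|=\max_{|k|\le e_m}(\cdots)$, i.e.\ express $\Phi$ as a pointwise minimum over diagonal $K_m$ with $|K_m|\le I_Le_m$ of the affine maps
\[
q-\textstyle\sum_m K_m r_m+\big(A-\sum_m B_mK_m-\beta I\big)^{\top}p .
\]
By Assumption~\ref{ass_1}(b), each such matrix is Metzler. It follows that if $p,p'\in\mathcal P$, then $\max(p,p')\in\mathcal P$. To check component $i$, suppose $p_i\ge p'_i$. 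Then $\Phi_i(\max(p,p'))\ge\Phi_i(p)\ge 0$, because increasing the other components can only raise each affine minorant's $i$-th component. So $\mathcal P$ is closed under componentwise maxima.

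\textbf{Boundedness via stabilizability.} Choose $K_m$ from the stabilizability definition, so that $H=A-\sum_m B_mK_m$ is Metzler and Hurwitz. Then $H-\beta I$ is also Hurwitz, so there is $z>0$ with $(H-\beta I)z<0$. For $p\in\mathcal P$ we have
\[
0\le z^{\top}\Phi(p)\le z^{\top}\big(q-\textstyle\sum_m K_mr_m\big)+p^{\top}(H-\beta I)z .
\]
This bounds $p^{\top}\big(-(H-\beta I)z\big)$. Since $-(H-\beta I)z>0$ and $p\ge 0$, the set $\mathcal P$ is bounded. It is also closed, so the LP attains its maximum. By max-closure, $\mathcal P$ has a greatest element $p^\star$, and $p^\star$ is the unique maximizer of $\mathds 1^{\top}p$.

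\textbf{Equivalence.} Suppose $p^\star$ is the maximizer and some component has $\Phi_i(p^\star)>0$. Then increasing $p^\star_i$ slightly keeps $\Phi_i\ge 0$ by continuity, and keeps $\Phi_j\ge 0$ for $j\ne i$ by the off-diagonal monotonicity. This contradicts maximality, so $\Phi(p^\star)=0$, i.e.\ $p^\star$ solves \eqref{ARE}. This also proves existence.

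Conversely, let $p\ge0$ solve \eqref{ARE}. Then $p\in\mathcal P$ and $p\le p^\star$, and I must show $p=p^\star$. Let $K_m$ attain the minimum for $p$, i.e.\ satisfy \eqref{opt_control_inf}. Then $\Phi(p)=0$ equals the affine map $\ell_K$ evaluated at $p$, while $\Phi(p^\star)=0\le\ell_K(p^\star)$. Subtracting, with $\delta:=p^\star-p\ge0$, gives
\[
(A-\textstyle\sum_m B_mK_m-\beta I)^{\top}\delta\ge 0 .
\]

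\textbf{Main obstacle.} The hard step is concluding $\delta=0$ from this inequality, because it requires the closed-loop matrix under the optimal gains to be Hurwitz, which stabilizability alone does not directly give. I would first try the argument of \cite{LR_paperII}. Pair the inequality with the stabilizing $z$ to get $p^{\top}$-type bounds, and show that the closed loop $H_K=A-\sum_mB_mK_m$ with optimal $K$ satisfies $\beta I-H_K^{\top}$ being a nonsingular M-matrix. The idea is to use that $p$ is a finite value: $p^{\top}x_0$ equals the discounted cost under $K$ via the verification identity of Theorem~\ref{thm_infty}. If that route stalls, I would simply invoke \cite[Cor.~8, Thm.~9]{LR_paperII}, as the lemma's attribution indicates, since the algebraic statement is identical to the deterministic one.
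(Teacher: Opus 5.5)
\paragraph{Gap in the ``only if'' direction.} You leave this direction open at exactly the step you flag, and neither of your proposed routes closes it:
\begin{itemize}
\item Pairing with the stabilizing $z$ tells you nothing about the closed loop $A-\sum_m B_mK_m$ under the \emph{optimal} gains for $p$, since $z$ belongs to a different gain.
\item The verification identity of Theorem~\ref{thm_infty} is only available under the transversality condition \eqref{ass_discounted}. That condition is an extra hypothesis not present in the lemma. For $\beta=0$ and the optimal closed loop it is essentially the stability you are trying to prove, so that route is circular.
\end{itemize}
Falling back on \cite{LR_paperII} is what the paper itself does. It leaves your self-contained argument incomplete, though.

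The rest of your argument is correct: eliminating $\zeta_m$ to get $\mathcal P=\{p\ge 0:\Phi(p)\ge 0\}$, max-closure from the Metzler structure, boundedness via a stabilizing gain, and $\Phi(p^\star)=0$ for the greatest element. Together these prove existence and the ``if'' direction.

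\paragraph{The missing idea.} Use the strict inequality \eqref{ass_nonneg} at the solution $p$ itself. Let $K$ be the minimizing gains for $p$, and set $N:=(A-\sum_m B_mK_m-\beta I)^{\top}$, which is Metzler. Then $\ell_K(p)=\Phi(p)=0$ gives
\begin{align*}
Np=-\Big(q-\textstyle\sum_m K_m r_m\Big)\le -\Big(q-\textstyle\sum_m e_m|r_m|\Big)<0 .
\end{align*}
From this the conclusion follows in three steps:
\begin{enumerate}
\item If some $p_i=0$, then $(Np)_i=\sum_{j\neq i}N_{ij}p_j\ge 0$, a contradiction. Hence $p>0$.
\item A Metzler matrix with $Np<0$ for some $p>0$ is Hurwitz, so $-N^{-1}\ge 0$.
\item Your inequality $N\delta\ge 0$ then gives $\delta=N^{-1}(N\delta)\le 0$. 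Hence $\delta=0$ and $p=p^\star$.
\end{enumerate}
Alternatively, let $t:=\max_i \delta_i/p_i$, attained at index $i$. Metzler monotonicity gives $0\le (N\delta)_i\le t\,(Np)_i$, which forces $t\le 0$.

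So stabilizability is needed only to bound the LP feasible set. The Hurwitz property of $A-\sum_m B_mK_m-\beta I$ for the optimal gains follows directly from \eqref{ARE} and the strict inequality \eqref{ass_nonneg}. With this step added, your argument is a complete, elementary proof of the lemma.
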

The transversality condition~\eqref{ass_discounted} requires that the first moment of the state converges to zero. Recall that mean square stability implies mean stability but the converse is not true{~\cite{MS_stab}}, this is illustrated in the following example.
\begin{exmp}
    Consider the scalar SDE $dx(t)=ax(t)dt+bx(t)dw(t)$, $x(0)=x_0\in \mathbb R_+$, with parameters satisfying $a<0$ and $2a+b^2>0$. The SDE has the closed-form solution $x(t)=x_0 \mathrm{exp}\big(\big( a-\tfrac{1}{2}b^2\big)t +bw(t)\big).$
    The first and second order moments are $\mathbb E[x(t)]=x_0 e^{at}$, $\mathbb E[x(t)^2]=x_0^2 e^{(2a+b^2)t}$. Since $a<0$, $\mathbb E[x(t)]\to 0$ as $t \to \infty$, so the system is mean stable. However, because $2a+b^2>0$, $\mathbb E[x(t)^2]\to \infty$ as $t \to \infty$. Hence, the system is not mean-square stable.
\end{exmp}
\section{Example: High-Frequency Trading}
\begin{figure*}[t!]
    \centering
    \includegraphics[width=1\textwidth]{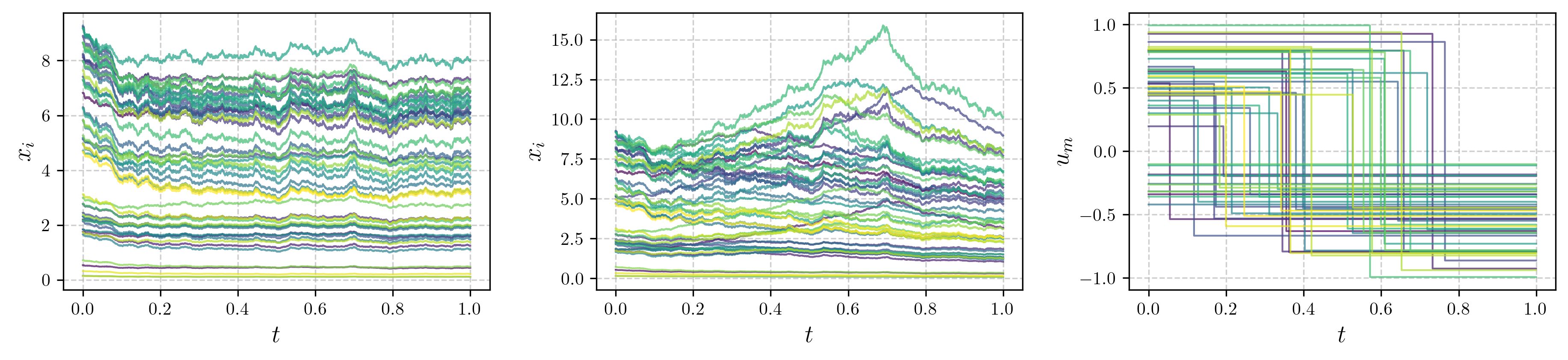}
    \caption{(Left panel) Allocation trajectories under passive control $u =0$. (Middle panel) Optimal allocation trajectories under the switching control. (Right) Optimal control switching policy.}
    \label{exmp_HFT}
\end{figure*} 
We consider a high-frequency trading problem over a one-second horizon, with parameters chosen to reflect a high-volatility, high-friction regime where switching is visible. {The ranges below are illustrative rather than calibrated to real market data, and are chosen to satisfy the assumption~\eqref{ass_nonneg} while keeping the switching behavior visible at the chosen time horizon.} The state $x(t)\in \mathbb R_+^{50}$ represents capital allocated across 50 technology stocks and evolves according to~\eqref{SDE}. The diagonal entries of $A$ model short-term expected returns and are randomly generated in $[-5\times 10^{-4}, \hspace{0.1mm}5\times 10^{-4}]$, while off-diagonal entries in $[0,\hspace{0.1mm} 2\times 10^{-4}]$ capture positive coupling effects. 
Trading is modeled by diagonal matrices $B_m$ ($M=L=50$), so that each control affects only its corresponding stock, i.e. $[B_m]_{ii}=b_m$ if $i=m$ and $0$ otherwise. The gains $b_m$ are generated in $[0.6, \hspace{0.2mm}1.7]$. The control $u_m(t)$  represents the rate at which capital is added to or withdrawn from stock $m$ and the constraint $|u_m(t)|\leq e_m x(t)$, with $e_m\in[0.1, \hspace{0.1mm}1]$, models liquidity limitations. 
Market uncertainty is introduced through two diagonal multiplicative noise channels $F_1$, $F_2$ with entries in $[0.02,\hspace{0.2mm} 0.05]$ and $[0.02, \hspace{0.2mm} 0.08]$, respectively, so that volatility scales 
the position size.
The initial allocation $x_0$ is drawn componentwise in $[0,10]$. We consider the finite-horizon objective~\eqref{max_obj} where $q$ represents the benefit of holding capital 
and $r_m$ models trading costs, satisfying $q > -\sum_{m=1}^{50} |r_m|e_m$ and generated in $[2\times 10^{-4},\hspace{0.1mm} 10^{-3}]$ and $[0.01,\hspace{0.1mm} 0.02]$, respectively. The optimal control has a switching structure indicating when it is optimal to withdraw  capital (risk-off) $u_m(t)=-e_mx(t)$, when increasing capital is advantageous (risk-on), $u_m(t)=e_mx(t)$ and when the marginal value is zero i.e. $u(t)\in [-e_mx(t), \hspace{0.1mm} e_mx(t)]$. 
Figure~\ref{exmp_HFT} illustrates both the allocation trajectories and the switching signals.
For a random parameter choice, the optimal value is $J(0,x_0)=p(0)^{\top}x_0\approx 5.50 $ while the passive strategy $u_m=0$ yields $4.31$, highlighting the performance achieved through dynamic switching at optimal times. Note that $p$ is independent of $F_n$, so $p(0)^{\top}x_0$ is invariant to noise intensity reflecting the robustness in Remark~\ref{rem4}.
\section{Conclusions}
This work presents explicit solutions to an $L_1$ optimal control problem 
for positive systems with multiplicative Gaussian noise and linear input constraints in both finite and discounted infinite-horizon settings. Forward invariance of the positive orthant is established for the associated multivariate Itô diffusion. 
The resulting value function and optimal policy coincide with those of the deterministic formulation, demonstrating robustness to multiplicative stochastic uncertainty. Future work includes identifying other stochastic system dynamics that admit explicit optimal solutions, {exploring alternative noise structures that preserve forward invariance of the positive orthant,} and applications in finance and epidemiology.

\section*{Acknowledgments}

We would like to thank Professor Valery Ugrinovskii for insightful comments and discussions.

\bibliographystyle{abbrv}  
\bibliography{cas-refs}
\end{document}